\documentclass[11pt]{article}
\usepackage[margin=1in]{geometry}
\usepackage{amsmath,amssymb,amsthm}
\usepackage[dvipsnames]{xcolor}
\usepackage{hyperref}
\hypersetup{pdftitle={A quadratic upper bound on the Chv\'atal rank of polytopes in the 0/1-cube},pdfauthor={Alberto Del Pia},colorlinks=true,linkcolor=blue!50!black,citecolor=blue!50!black,urlcolor=blue!50!black}

\newtheorem{theorem}{Theorem}
\newtheorem{lemma}[theorem]{Lemma}
\newtheorem{corollary}[theorem]{Corollary}
\newtheorem{claim}{Claim}

\newenvironment{cpf}
{\begin{trivlist} \item[] {\em Proof of claim. }}
{$\hfill\diamond$ \end{trivlist}}

\theoremstyle{definition}
\newtheorem{fact}{Fact}

\newcommand{\Z}{\mathbb{Z}}
\newcommand{\R}{\mathbb{R}}
\newcommand{\PI}{P_I}
\newcommand{\norm}[1]{\lVert #1\rVert}
\newcommand{\cube}{[0,1]^n}
\newcommand{\rk}{\operatorname{rk}}
\newcommand{\dep}{\operatorname{depth}}
\DeclareMathOperator{\relint}{relint}

\title{A quadratic upper bound on the Chv\'atal rank\\ of polytopes in the $0/1$-cube}
\author{Alberto Del Pia\thanks{Department of Industrial and Systems Engineering \& Wisconsin Institute for Discovery, University of Wisconsin--Madison, Madison, WI, USA. E-mail: \texttt{delpia@wisc.edu}}}
\date{September 25, 2026}

\begin{document}
\maketitle

\begin{abstract}
\noindent We show that every polytope $P\subseteq[0,1]^n$, and more generally every compact convex set, has Chv\'atal rank at most $12.22\,n^2+n\log_2 n+2n+4$. This improves the $O(n^2\log n)$ bound of Eisenbrand and Schulz and, together with the $\Omega(n^2)$ lower bound of Rothvo\ss{} and Sanit\`a, shows that the maximum Chv\'atal rank of a polytope in $[0,1]^n$ is $\Theta(n^2)$. More precisely, if $P$ contains an integer point, then for every $c\in\Z^n\setminus\{0\}$ the inequality $cx\le\max\{cy: y\in P\cap\Z^n\}$ is valid for the $k$-th Chv\'atal closure of $P$ for some $k\le 12.22\,n^2+2n+2\log_2\norm c_\infty+4$. Following Eisenbrand and Schulz, we derive this inequality along a chain of coarser and coarser integer vectors, but instead of halving the vector in each step, we round $\tau c$ for a scale $\tau$ chosen freely in each dyadic window $[2^{-t-1},2^{-t}]$. The main new ingredient is a multiscale version of Dirichlet's approximation theorem, proved by an elementary volume argument: for every $c\in\R^n$, the $\ell_1$-distances of $\tau c$ to $\Z^n$, minimized within each dyadic window and summed over all windows, total less than $1.222\,n^2$, independently of $\norm c_\infty$.

\medskip\noindent\textbf{Keywords:} Chv\'atal rank, Chv\'atal--Gomory cuts, $0/1$-polytopes, cutting planes, Diophantine approximation.

\noindent\textbf{MSC 2020:} Primary 90C10; Secondary 11J13, 52B11, 52C07, 90C57.
\end{abstract}

\section{Introduction}

Chv\'atal--Gomory cuts are a basic tool of integer programming, and the Chv\'atal rank of a polytope measures how many rounds of them are needed to reach its integer hull. Chv\'atal~\cite{Chv73} showed that the rank of every rational polytope is finite, and Schrijver~\cite{Sch80} extended this to rational polyhedra. In general the rank can be arbitrarily large already in dimension two~\cite[p.~344]{Sch86}. For polytopes in the cube $[0,1]^n$, however, it is bounded in terms of $n$ alone~\cite{BocEisHarSch99}. The previous best upper bound, $O(n^2\log n)$, is due to Eisenbrand and Schulz~\cite{EisSch03}, and the best lower bound, $\Omega(n^2)$, is due to Rothvo\ss{} and Sanit\`a~\cite{RotSan13,RotSan17}. We close this gap.

For a compact convex set $P\subseteq\R^n$ its \emph{Chv\'atal closure} is
\[
P':=\bigcap_{a\in\Z^n}\bigl\{x\in\R^n: ax\le\lfloor\max_{y\in P}ay\rfloor\bigr\},
\]
where $ax:=\sum_{\ell=1}^na_\ell x_\ell$ denotes the standard inner product of $a,x\in\R^n$. We set $P':=\emptyset$ if $P=\emptyset$. Let $\PI:=\operatorname{conv}(P\cap\Z^n)$ be the \emph{integer hull} of $P$, which is a polytope since $P\cap\Z^n$ is finite. It is well known that $\PI\subseteq P'\subseteq P$. In particular, $P'$ is again a compact convex set, and we can iterate: let $P^{(0)}:=P$ and $P^{(k+1)}:=(P^{(k)})'$ for every integer $k\ge0$. We call $P^{(k)}$ the \emph{$k$-th Chv\'atal closure} of $P$. The \emph{Chv\'atal rank} $\rk(P)$ is the least $k$ with $P^{(k)}=\PI$.

If $\PI\ne\emptyset$, then for $v\in\Z^n$ we write $h(v):=\max_{x\in\PI}vx$, which is an integer, and $\dep_P(v)$ denotes the least $k$ such that $vx\le h(v)$ is valid for $P^{(k)}$. Following~\cite{EisSch03}, we say that $v$ is saturated with respect to $P^{(k)}$ if $vx\le h(v)$ is valid for $P^{(k)}$.

The Chv\'atal rank is usually studied for polytopes, but our main result holds more generally for compact convex sets.

\begin{theorem}\label{thm:main}
Let $P\subseteq\cube$ be a compact convex set with $\PI\ne\emptyset$, and let $c\in\Z^n\setminus\{0\}$, $N:=\norm c_\infty$. Then
\[
\dep_P(c)\ \le\ 12.22\,n^2+2n+2\log_2N+4.
\]
Moreover, for every compact convex set $P\subseteq\cube$,
\[
\rk(P)\ \le\ 12.22\,n^2+n\log_2n+2n+4.
\]
\end{theorem}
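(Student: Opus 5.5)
The plan follows Eisenbrand--Schulz, with a freely chosen scale $\tau$ in place of halving. Set $t_0:=\lceil\log_2 N\rceil$. For each $t=0,1,\dots,t_0$ we pick a scale $\tau_t\in[2^{-t-1},2^{-t}]$ and let $c^{(t)}:=\lfloor\tau_t c\rceil$ be the componentwise rounding, with $\tau_0:=1$ so that $c^{(0)}=c$. At the coarse end, $\norm{c^{(t_0)}}_\infty\le1$. For vectors in $\{-1,0,1\}^n$ depth $O(n)$ is available from the Bockmayr--Eisenbrand--Hartmann--Schulz argument, or more crudely from a direct induction, and this accounts for the $2n+O(1)$ term.

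The inductive step bounds $\dep_P(c^{(t)})$ in terms of $\dep_P(c^{(t+1)})$. Write
\[
c^{(t)}=\lambda c^{(t+1)}+r,\qquad \lambda:=\tau_t/\tau_{t+1}\in[1,4],
\]
where $r$ is real with $\norm r_1\le d_t+2d_{t+1}$, and $d_s:=\operatorname{dist}_1(\tau_s c,\Z^n)$. Since $P\subseteq\cube$, for every $x,y\in\cube$ we have
\[
|c^{(t)}x-c^{(t)}y-\lambda(c^{(t+1)}x-c^{(t+1)}y)|\le\norm r_1 .
\]
So once $c^{(t+1)}$ is saturated, the maximum of $c^{(t)}x$ over the current closure exceeds $h(c^{(t)})$ by at most about $\norm r_1$ plus a constant. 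The standard Eisenbrand--Schulz lemma then applies: if $c^{(t)}x\le h+\delta$ is valid and $c^{(t+1)}$ is saturated, the right-hand side can be lowered by one unit per round (using the face where $c^{(t+1)}x=h(c^{(t+1)})$, which has lower dimension, together with induction on dimension or their "pushing" lemma). This costs $O(\delta)$ rounds. Each level therefore costs $O(1+d_t+d_{t+1})$ rounds, and summing over $t$ gives $O(\log N)+O\bigl(\sum_t d_t\bigr)$. The $2\log_2N$ term comes from the $O(1)$ part of each level.

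The core of the proof, and the step I expect to be hardest, is a multiscale Dirichlet lemma: for every $c\in\R^n$ one can choose $\tau_t$ in each dyadic window so that $\sum_t d_t<1.222\,n^2$, independently of $N$. My first attempt is a volume/averaging argument. Consider the curve $\tau\mapsto\tau c \bmod \Z^n$ on the torus and the set of $\tau$ in a window where the $\ell_1$-distance is below some $\epsilon$. If $\tau c$ and $\tau'c$ are both close to $\Z^n$, then so is $(\tau-\tau')c$, so windows with large minimal distance force the trajectory to be spread out. I would then count, for each threshold $\epsilon=2^{-j}$, how many windows can have minimum distance at least $\epsilon$. A packing argument with $\ell_1$-balls of volume $(2\epsilon)^n/n!$ along with Dirichlet's theorem in the form ``within a factor-$\epsilon^{-n}$ range of scales some $\tau$ gives distance $\le\epsilon$'' suggests that at most about $n\log_2(1/\epsilon)$ windows are bad at level $\epsilon$. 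Summing over $j$ with weight $\epsilon$ gives a bound of order $n\cdot n$. Tuning the constants should give $1.222n^2$, and the factor $10$ from the rounding-cost analysis then yields $12.22n^2$.

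For the rank bound: a compact convex $P\subseteq\cube$ with $\PI=\emptyset$ has small rank by a standard argument. Otherwise $\PI$ is a $0/1$-polytope, which is described by facet normals $c$ with $\norm c_\infty\le n^{n/2}$ by Hadamard's bound. Since depth bounds for all facets give the rank, plugging $2\log_2 N\le n\log_2 n$ into the first bound gives the second.
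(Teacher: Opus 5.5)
Your outline has the paper's architecture: roundings of $\tau_tc$ with $\tau_t$ in dyadic windows, a multiscale Dirichlet bound, and Hadamard for the rank. But both technical cores are missing, and the first is described incorrectly.

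\emph{Depth accounting.} The claim ``each level costs $O(1+d_t+d_{t+1})$ rounds'' is exactly what needs proof. Saturation of $c^{(t+1)}$ only gives the starting bound $c^{(t)}x\le h(c^{(t)})+\norm{r}_1$ (Lemma~\ref{lem:jump}); the face $\{c^{(t+1)}x=h(c^{(t+1)})\}$ plays no role after that. What lowers $c^{(t)}x\le\gamma$ to $\gamma-1$ is that the face of the current closure where $c^{(t)}x=\gamma$ avoids the boundary of the cube. Then one round empties it and a second round rounds down. This needs $c^{(t)}x\le\gamma-1$ to hold already on the boundary, i.e.\ an induction on dimension, and the cost of that induction is the whole issue. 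Redone from scratch at each level, it multiplies the cost of a level by up to $n$. Interleaved within a level, it still adds $2n$ per level, about $2n\log_2N$ in total, which can be $\Theta(n^2\log n)$ for facet normals. Interleaved only across levels, via $D(d,t)\le\max\{D(d,t+1),D(d-1,t)\}+2E_t$, it can add a term of order $n\max_tE_t$. A single $E_t$ can be of order $n$, since only $\sum_tE_t$ is controlled. None of these gives $12.22\,n^2+2n+\cdots$.

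The paper's Lemma~\ref{lem:pipeline} interleaves at the finest grain. The passage from $v_{t+1}$ to $v_t$ is made on $d$-faces as soon as $v_{t+1}$ is saturated on $d$-faces. To lower the bound for $v_t$ by one unit on a $d$-face, it suffices that the previous unit holds on that face and this unit holds on its facets. This gives $f(t,d,j+1)\le\max\{f(t,d,j),f(t,d-1,j+1)\}+2$, so the dimension induction is paid once and $\dep_P(c)\le2n+2\sum_tE_t$. That $2n$ is not the depth of your final $\{-1,0,1\}$-vector, which would be an extra cost; the paper runs the chain down to $v_T=0$.

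Smaller points:
\begin{itemize}
\item $\norm{r}_1\le d_t+\lambda d_{t+1}\le d_t+4d_{t+1}$, not $d_t+2d_{t+1}$. Your factor $10$ presupposes this $4$ and two rounds per unit, not one.
\item You skip the reduction from compact convex sets to rational polytopes, which the face property $F'=P'\cap F$ requires.
\end{itemize}

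\emph{Multiscale Dirichlet.} This step is only heuristic, and the route you suggest does not work. Dirichlet's theorem in the form you quote puts a good scale in every range of ratio $\epsilon^{-n}$, i.e.\ in every run of about $n\log_2(1/\epsilon)$ consecutive windows. That bounds the length of runs of bad windows, not their number. It allows almost all of the roughly $\log_2N$ windows to be bad, so $\sum_td_t$ would still grow with $N$.

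What is needed is Theorem~\ref{thm:dirichlet}, a bound on the total number of bad windows over all $j\ge0$, proved by treating all windows at once:
\begin{itemize}
\item Averaging over $z$ gives $X\subseteq[0,1)$ of measure at least $(\rho/n)^n$ whose images $\tau c$ all lie in one box $z+[0,\rho/n)^n$ modulo $\Z^n$.
\item So every difference $|\tau-\tau'|$ of points of $X$ has $D_c<\rho$ and lies in no bad window.
\item With $\Phi(\lambda):=\sup_\xi\mathrm{vol}(X\cap[\xi,\xi+\lambda])$, one always has $\Phi(2\lambda)\le2\Phi(\lambda)$.
\item For bad $j$, also $\Phi(2^{-j})\le\Phi(2^{-j-1})$: a subset of an interval of length $2^{-j}$ whose pairwise distances avoid $[2^{-j-1},2^{-j}]$ lies in an interval of length $2^{-j-1}$.
\item Halving down from $\Phi(1)$ therefore gives $(\rho/n)^n\le2^{-|B_\rho|}$.
\end{itemize}
Integrating $|B_\rho|\le n\log_2(n/\rho)$ over $0<\rho\le n/2$ gives the constant $\frac12+\frac1{2\ln2}<1.222$.
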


Together with the lower bound of~\cite{RotSan17}, Theorem~\ref{thm:main} shows that the maximum Chv\'atal rank of a polytope in $[0,1]^n$ is $\Theta(n^2)$. We have not tried to optimize the constants, since our focus is the order of growth.

\paragraph{Previous work.} Chv\'atal, Cook and Hartmann~\cite{ChvCooHar89} gave polytopes in the cube with empty integer hull and rank at least $n$, which is exactly $n$ by~\cite{BocEisHarSch99}. They also gave lower bounds, none exceeding $n$, for relaxations of natural combinatorial problems. Bockmayr, Eisenbrand, Hartmann and Schulz~\cite{BocEisHarSch99} proved the first polynomial upper bound, $\rk(P)=O(n^3\log n)$, via $\dep_P(c)\le n^2(1+\lfloor\log_2\norm c_\infty\rfloor)$ for $c\in\Z^n\setminus\{0\}$. Eisenbrand and Schulz~\cite{EisSch03} improved both bounds to
\[
\dep_P(c)\le n^2+2n\bigl(1+\lfloor\log_2\norm c_\infty\rfloor\bigr),\qquad \rk(P)\le n^2\bigl(2+\lfloor\log_2 n\rfloor\bigr),
\]
and also proved $\dep_P(c)\le n+\norm c_1$ and a lower bound $(1+\varepsilon)n$ for infinitely many $n$. Pokutta and Stauffer~\cite{PokSta11} raised the lower bound to $(1+1/e-o(1))n$. Pokutta and Schulz~\cite{PokSch11} characterized the integer-empty polytopes of rank $n$. Rothvo\ss{} and Sanit\`a~\cite{RotSan13,RotSan17} constructed polytopes of rank $\Omega(n^2)$ (see also the textbook~\cite[Section~5.2.2]{ConCorZam14}).

The $\log n$ in the upper bound comes only from the size of facet coefficients: the facet normals of $\PI$ can require coefficients with $\Theta(n\log n)$ bits~\cite{AloVu97,Zie00}, and \cite{EisSch03} spends $\Theta(n)$ rounds per bit. Hence the $\log n$ cannot be removed by a better upper bound on the coefficients, and a better depth bound is needed. Cornu\'ejols and Lee~\cite{CorLee16}, \cite[Section~7]{CorLee18} obtained $O(n^2\log\log n)$ for polytopes that miss at most quasi-polynomially many $0/1$ points, by showing that then $\log_2\norm c_\infty=O(n\log\log n)$ for the facet normals. The coefficients in the construction of~\cite{RotSan17} are of order $2^{\Theta(n)}$. In the concluding remarks of~\cite{RotSan17}, Rothvo\ss{} and Sanit\`a observe that beyond this range Dirichlet's theorem on simultaneous Diophantine approximation is an obstacle to extending their lower bound, and that ``this insight could potentially be used to improve the upper bound on the Chv\'atal rank''. Our proof carries out this idea, as explained in the overview below.

\paragraph{Overview of the proof.} As in \cite{EisSch03}, we saturate $c$ through a chain of coarser and coarser integer vectors $c=v_0,v_1,\dots$ ending at $0$, together with positive multipliers $\mu_t$ such that $v_t$ is close in $\ell_1$-norm to $\mu_tv_{t+1}$. If $v_{t+1}$ is already saturated with respect to $P^{(k)}$, then by Lemma~\ref{lem:jump} the inequality $v_tx\le h(v_t)+\norm{v_t-\mu_tv_{t+1}}_1$ is valid for $P^{(k)}$. Each unit of the excess $\norm{v_t-\mu_tv_{t+1}}_1$ costs two further applications of the closure, and the induction over the faces of the cube adds only $2n$ in total. This is Lemma~\ref{lem:pipeline}: $\dep_P(c)\le2n+2\sum_tE_t$, where $E_t$ is the error of the $t$-th step rounded up. Lemmas~\ref{lem:jump} and~\ref{lem:pipeline} are proved in Section~\ref{sec:chains}.

It remains to choose the chain so that its total error is small. Eisenbrand and Schulz take $v_{t+1}=\lfloor v_t/2\rfloor$ and $\mu_t=2$, so each of the roughly $\log_2\norm c_\infty$ steps can have error close to $n$. The errors then add up to about $n\log_2\norm c_\infty$, which is of order $n^2\log n$ for the facet normals of $\PI$ in the worst case. Instead, we take $v_t$ to be a rounding of $\tau_tc$, where $\tau_t$ is chosen freely in the dyadic window $[2^{-t-1},2^{-t}]$ so that $\tau_tc$ is as close as possible to $\Z^n$. The main new ingredient is Theorem~\ref{thm:dirichlet}: for every $c\in\R^n$ and $0<\rho\le n$, at most $n\log_2(n/\rho)$ windows contain no $\tau$ for which the $\ell_1$-distance of $\tau c$ to $\Z^n$ is at most $\rho$. Consequently the smallest such distances, one for each window, sum to $O(n^2)$ independently of $\norm c_\infty$, which is Corollary~\ref{cor:sum}, and hence so does the total error along the chain. The proof of Theorem~\ref{thm:dirichlet} is an elementary volume argument that treats all windows at once. Theorem~\ref{thm:dirichlet} and Corollary~\ref{cor:sum} are proved in Section~\ref{sec:dirichlet}, which concerns only a vector $c\in\R^n$ and involves no polytope.

In Section~\ref{sec:proof} we combine these results to prove Theorem~\ref{thm:main}. The bound on the rank follows since the facet normals of $\PI$ have coefficients of absolute value at most $n^{n/2}$. We close this section with some known facts that we will use.

\begin{fact}[{\cite[Theorem~1]{Sch80}}]\label{f:rational} If $P$ is a rational polytope, so is $P'$.\end{fact}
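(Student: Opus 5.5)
The plan is to follow Schrijver's argument and reduce the statement to the existence of a totally dual integral (TDI) description of $P$. Concretely, I would show that $P$ can be written as $P=\{x: Ax\le b\}$ with $A$ integral, $b$ rational, and the system TDI. I would then prove that for such a system
\[
P'=\{x: Ax\le\lfloor b\rfloor\},
\]
where $\lfloor b\rfloor$ is taken componentwise. The right-hand side is visibly a rational polyhedron, and it is bounded since it lies in $P$, so it is a rational polytope. If $P=\emptyset$, then $P'=\emptyset$ by convention and there is nothing to prove, so I assume $P\neq\emptyset$.

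\emph{Step 1: construct a TDI system.} Start from any rational description of $P$, scaled so that the constraint matrix is integral. For each minimal face $F$ of $P$ (here these are vertices), let $C_F$ be the cone generated by the rows that are tight on $F$; this is the normal cone of $P$ at $F$. Since $C_F$ is a rational polyhedral cone, Gordan's lemma gives a finite Hilbert basis $H_F\subseteq\Z^n$ of $C_F\cap\Z^n$. Every $a\in H_F$ attains its maximum over $P$ on $F$, so the value $\max_{y\in P}ay$ is rational. Add the inequality $ax\le\max_{y\in P}ay$ for every $a\in H_F$ and every minimal face $F$. This adds only valid inequalities, so the set described is still $P$.

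For TDI, take an integral objective $c$ with $\max_{x\in P} cx$ finite, which always holds since $P$ is a polytope. The maximum is attained on some minimal face $F$, so $c\in C_F\cap\Z^n$. Hence $c$ is a nonnegative integer combination of elements of $H_F$, all of which are tight on $F$. By complementary slackness this combination is an optimal dual solution, and it is integral.

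\emph{Step 2: identify $P'$.} The inclusion $P'\subseteq\{x: Ax\le\lfloor b\rfloor\}$ holds because each row $a_i$ of $A$ is integral and $\max_P a_ix\le b_i$. For the reverse inclusion, fix $a\in\Z^n$ and let $\delta:=\max_P ax$. By TDI there is an integral $y\ge0$ with $yA=a$ and $yb=\delta$. For any $x$ with $Ax\le\lfloor b\rfloor$,
\[
ax=yAx\le y\lfloor b\rfloor .
\]
The number $y\lfloor b\rfloor$ is an integer and is at most $yb=\delta$, so it is at most $\lfloor\delta\rfloor$. Thus $x$ satisfies every Chv\'atal--Gomory cut, which gives the reverse inclusion.

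The main obstacle is Step 1, the existence of the TDI system, and specifically the claim that the Hilbert basis of each normal cone yields integral dual optima. I would handle it exactly as above, via minimal faces and Gordan's lemma. Step 2 is then a short duality computation.
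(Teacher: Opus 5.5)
Your proof is correct. It is the standard argument behind the cited result: a TDI system $Ax\le b$ with integral $A$, built from Hilbert bases of the normal cones at the vertices (the Giles--Pulleyblank construction), followed by the identity $P'=\{x: Ax\le\lfloor b\rfloor\}$. The paper does not reprove this fact and only cites \cite[Theorem~1]{Sch80}, so your route coincides with the source it relies on.
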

\begin{fact}[{\cite[p.~340]{Sch86}}]\label{f:face} If $F$ is a face of a rational polytope $P$, then $F'=P'\cap F$.\end{fact}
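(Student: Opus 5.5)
The inclusion $F'\subseteq P'\cap F$ is immediate. Since $F\subseteq P$, we have $\max_{y\in F}ay\le\max_{y\in P}ay$ for every $a\in\Z^n$. So every Chv\'atal--Gomory cut for $P$ is valid for $F'$, which gives $F'\subseteq P'$. Also $F'\subseteq F$. The real content is the reverse inclusion $P'\cap F\subseteq F'$. For this I will show that every cut $cx\le\lfloor\delta\rfloor$ with $c\in\Z^n$ and $\delta:=\max_{x\in F}cx$ is valid for $P'\cap F$. The approach is to tilt $c$ by an integer multiple of the normal of a supporting hyperplane of $F$, so that the resulting cut for $P$, restricted to $F$, gives exactly $cx\le\lfloor\delta\rfloor$.

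\textbf{Setup.} Since $P$ is a rational polytope, I write $P=\{x: Ax\le b\}$ with $A$ and $b$ integral, after scaling the rows. I also pick an integral $a$ and a rational $\beta$ with $ax\le\beta$ valid for $P$ and $F=P\cap\{ax=\beta\}$; if $a=0$, then $F=P$ and there is nothing to prove. We may assume $F\neq\emptyset$.

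\textbf{Case $\beta\notin\Z$.} The cut $ax\le\lfloor\beta\rfloor$ is valid for $P'$, so $P'\cap F=\emptyset$. On the other side, $F\subseteq\{ax=\beta\}$, so the cuts $ax\le\lfloor\beta\rfloor$ and $-ax\le\lfloor-\beta\rfloor$ are both valid for $F'$, and hence $F'=\emptyset$. Therefore the equality holds, and we may assume $\beta\in\Z$.

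\textbf{Main step.} By LP duality applied to $\max\{cx: Ax\le b,\ -ax\le-\beta\}$, there exist $y\ge0$ and $\lambda\ge0$ with $yA-\lambda a=c$ and $yb-\lambda\beta=\delta$. Let $u:=\lceil\lambda\rceil$, which is an integer, so $c+ua\in\Z^n$. For $x\in P$,
\[
(c+ua)x=(yA)x+(u-\lambda)ax\le yb+(u-\lambda)\beta=\delta+u\beta .
\]
Hence $(c+ua)x\le\lfloor\delta+u\beta\rfloor=\lfloor\delta\rfloor+u\beta$ is a Chv\'atal--Gomory cut for $P$; the last equality uses $u\beta\in\Z$. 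On $F$ we have $ax=\beta$, so this cut reduces to $cx\le\lfloor\delta\rfloor$ on $P'\cap F$. Since $c\in\Z^n$ was arbitrary, this gives $P'\cap F\subseteq F'$.

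\textbf{Main obstacle.} The only delicate point is that the multiplier $\lambda$ from duality is fractional. Rounding it up preserves validity on $P$ only because $ax\le\beta$ holds on $P$, and the integrality of $\beta$, ensured by the case split above, is what allows $u\beta$ to be pulled out of the floor. No TDI argument is needed, because $P'$ allows every integer normal vector.
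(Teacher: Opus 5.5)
Your proof is correct, but it differs from the cited argument. The paper does not prove this fact; it refers to Schrijver, p.~340, where the proof goes through total dual integrality. There one writes $P=\{x: Ax\le b\}$ with $Ax\le b$ TDI and $A$ integral, and uses that then $P'=\{x: Ax\le\lfloor b\rfloor\}$. One also uses that turning the valid inequality $ax\le\beta$ (with $a$ integral) into an equation keeps the system TDI. Applying the rounding description to $F=\{x: Ax\le b,\ ax=\beta\}$ then gives $F'=\{x: Ax\le\lfloor b\rfloor,\ ax=\beta\}=P'\cap F$ for integral $\beta$.

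Your argument unpacks the mechanism behind that ``equation preserves TDI'' step: you round the multiplier of $ax\le\beta$ up to an integer $u$ and pull $u\beta\in\Z$ out of the floor. You apply this directly to one Chv\'atal--Gomory cut at a time. This needs only LP duality: no TDI representation (Giles--Pulleyblank) and no description of $P'$ by rounded right-hand sides. Schrijver's route is shorter once that machinery is in place, and it produces an explicit inequality description of $F'$. Yours is self-contained and shows that the only integrality that matters is that of $(a,\beta)$.

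One small simplification: your main step never uses the integrality of $A$ and $b$, but that integrality lets you drop the case $\beta\notin\Z$. If you take $a$ and $\beta$ to be the sums of the rows of $A$ and the entries of $b$ that are tight on $F$, both are integral. Scaling $(a,\beta)$ by the denominator of $\beta$ works too.
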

\begin{fact}[{\cite[Lemma~3]{BocEisHarSch99}}]\label{f:empty} If $P\subseteq\cube$ is a rational polytope with $\PI=\emptyset$, then $P^{(n)}=\emptyset$.\end{fact}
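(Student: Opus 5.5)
The statement immediately above is Fact~\ref{f:empty}: if $P\subseteq\cube$ is a rational polytope with $\PI=\emptyset$, then $P^{(n)}=\emptyset$. I would prove this by induction on $n$, using the faces of the cube. For $n=0$ the cube is a single integer point, so $\PI=\emptyset$ forces $P=\emptyset$. For $n=1$, $P=[\alpha,\beta]$ with $0<\alpha\le\beta<1$. Then $\max_P x=\beta$ and $\max_P(-x)=-\alpha$, so the cuts $x\le\lfloor\beta\rfloor=0$ and $-x\le\lfloor-\alpha\rfloor=-1$ already give $P'=\emptyset$.

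For the inductive step I would use the facets $F_i^0=\{x\in\cube: x_i=0\}$ and $F_i^1=\{x\in\cube:x_i=1\}$. First, $P\cap F_i^j$ is a face of $P$: it is cut out by the valid inequality $x_i\ge0$ (resp.\ $x_i\le1$). It is also a rational polytope with no integer points, and it lives in an $(n-1)$-dimensional cube. By Fact~\ref{f:face}, applied iteratively (using Fact~\ref{f:rational} to keep rationality), we have $(P\cap F)^{(k)}=P^{(k)}\cap F$. The induction hypothesis, after identifying $F$ with $[0,1]^{n-1}$, then gives $P^{(n-1)}\cap F=\emptyset$ for every facet $F$ of the cube. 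One technical point is that the closure of $P\cap F$ computed inside $\R^{n-1}$ must agree with the one computed in $\R^n$. This holds because a face $x_i=j$ with integer $j$ is a lattice-preserving affine subspace: integer vectors restrict surjectively onto $\Z^{n-1}$, and adding multiples of $e_i$ lets one lift cuts.

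It then remains to show that if $Q:=P^{(n-1)}\subseteq\cube$ misses every facet of the cube, then $Q'=\emptyset$. Since $Q$ is compact and disjoint from the facet $x_i=0$, we get $\min_Q x_i>0$, so the cut $-x_i\le\lfloor-\min_Q x_i\rfloor\le-1$ is valid for $Q'$. Similarly $\max_Q x_i<1$ gives $x_i\le0$. These two cuts contradict each other, so $Q'=\emptyset$, and hence $P^{(n)}=\emptyset$. In fact a single coordinate $i$ suffices.

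The main obstacle is the bookkeeping in the middle step. One must check that Fact~\ref{f:face} iterates, which needs $P^{(k)}\cap F$ to be a face of $P^{(k)}$; this holds since $x_i\ge0$ remains valid. One must also check the lattice-invariance of the closure under the coordinate projection. Everything else is immediate.
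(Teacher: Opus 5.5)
Your proof is correct. It is essentially the standard argument behind this fact, which the paper does not prove but only cites from \cite[Lemma~3]{BocEisHarSch99}: induct on $n$ through the faces $P\cap\{x_i=j\}$, $j\in\{0,1\}$, and use Fact~\ref{f:face} (iterated, with Fact~\ref{f:rational}) to get $P^{(n-1)}\cap\{x_i=j\}=\emptyset$. You then finish with the contradictory cuts $x_i\le0$ and $x_i\ge1$, which is the same final step the paper uses in the proof of Claim~\ref{cl:step}; your check that the closure commutes with deleting a coordinate fixed to an integer is exactly the detail needed to apply the induction hypothesis.
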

\begin{fact}[{\cite{PadGro85}}]\label{f:hadamard}
Let $P\subseteq\cube$ be a polytope with $\PI\ne\emptyset$. There are $a_1,\dots,a_m\in\Z^n\setminus\{0\}$ and $b_1,\dots,b_m\in\Z$ such that $\PI=\{x\in\R^n: a_ix\le b_i,\ i=1,\dots,m\}$ and $\norm{a_i}_\infty\le n^{n/2}$ for all $i$.
\end{fact}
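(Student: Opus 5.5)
The plan is to get every inequality in the description of $\PI$ from a suitable set of $0/1$ points, write its coefficients as $n\times n$ minors of a matrix with entries in $\{0,\pm1\}$, and bound these minors with Hadamard's inequality. Since $P\subseteq\cube$, every integer point of $P$ is a $0/1$-vector, so $\PI$ is a $0/1$-polytope. Let $d:=\dim\PI$. The description will consist of two kinds of inequalities: pairs $ax\le b$, $-ax\le-b$ for the equations of the affine hull of $\PI$, and one inequality for each facet of $\PI$. If $d=0$ we only need the equations.

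\emph{The minor construction.} Take $n$ linearly independent rows in $\R^{n+1}$, each of one of two forms: $(v,-1)$ with $v$ a vertex of $\PI$, or $(e_j,0)$ with $e_j$ a unit vector. Let $M$ be the resulting $n\times(n+1)$ matrix. Its kernel is spanned by the vector $(a,b)$ whose entries are, up to sign, the $n\times n$ minors of $M$ obtained by deleting one column. This vector is nonzero, integral, and satisfies $vx$-rows as $av=b$ and $a_j=0$ for each unit row $e_j$. Every column of $M$ has $n$ entries in $\{0,\pm1\}$, so its Euclidean norm is at most $\sqrt n$. Hadamard's inequality then bounds every $n\times n$ minor by $n^{n/2}$, so $|b|\le n^{n/2}$ and $\norm a_\infty\le n^{n/2}$.

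\emph{Affine hull equations.} Choose affinely independent vertices $v_0,\dots,v_d$ of $\PI$. Choose a set $J$ of $d$ coordinates such that projecting $\operatorname{aff}\PI$ onto the coordinates in $J$ is injective; equivalently, the differences $v_i-v_0$ restricted to $J$ are linearly independent. For each $k\notin J$, build $M$ from the rows $(v_i,-1)$ for $i=0,\dots,d$ together with $(e_j,0)$ for $j\notin J\cup\{k\}$. This gives $(d+1)+(n-d-1)=n$ rows, and they are independent by the choice of $J$. The resulting $(a,b)$ gives an equation $ax=b$ that holds on all of $\operatorname{aff}\PI$. We also have $a_k\ne0$: otherwise $a$ would be supported on $J$, and then injectivity of the projection would force the kernel vector $(a,b)$ to be $0$. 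Since $a$ vanishes on the coordinates outside $J\cup\{k\}$ and $a_k\ne0$, the $n-d$ equations obtained for the different $k\notin J$ are independent. Hence they cut out $\operatorname{aff}\PI$.

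\emph{Facets.} Let $F$ be a facet of $\PI$, and pick $d$ affinely independent vertices of $F$. Take $M$ to be their rows $(v,-1)$ together with the unit rows $(e_j,0)$ for $j\notin J$, where $J$ is chosen as before for $\PI$. This again gives $n$ rows. The step I expect to need the most care is showing that these rows are independent and that the resulting hyperplane $ax=b$ does not contain all of $\PI$. Suppose it did. Then adding the row $(w,-1)$ for a vertex $w$ of $\PI$ outside $\operatorname{aff}F$ would give $n+1$ rows, all orthogonal to $(a,b)$. These rows are linearly independent, because projection onto the coordinates in $J$ is affinely injective on $\operatorname{aff}\PI$. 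That would force $(a,b)=0$, a contradiction. The same injectivity argument gives the independence of the original $n$ rows. So $ax=b$ supports $F$ but not $\PI$. Replacing $(a,b)$ by $-(a,b)$ if necessary, $ax\le b$ is valid for $\PI$ and defines $F$, and the minor bound above gives $\norm a_\infty\le n^{n/2}$.

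Collecting the facet inequalities and the pairs of affine-hull equations gives the required description of $\PI$.
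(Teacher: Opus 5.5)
Your argument is correct. Note that the paper does not prove this statement: it quotes it as a known fact from Padberg and Gr\"otschel. So there is no proof in the paper to compare against.

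What you give is the classical argument behind that citation. The integral facet normals are signed maximal minors of an $n\times(n+1)$ matrix with entries in $\{0,\pm1\}$, and Hadamard's inequality bounds them by $n^{n/2}$. Your choice of a coordinate set $J$ on which the projection of $\operatorname{aff}\PI$ is injective cleanly covers the case where $\PI$ is not full-dimensional. The paper needs that case, since the rank bound in Section~\ref{sec:proof} uses $\dep_P(a_i)$ for every $a_i$, including the pairs coming from the equations of $\operatorname{aff}\PI$.

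The one step you assert without justification is that $\PI$ lies on one side of $H=\{x: ax=b\}$. This follows from the facet property. $H$ contains the $(d-1)$-dimensional set $\operatorname{aff}F$ but not $\operatorname{aff}\PI$, so $H\cap\operatorname{aff}\PI=\operatorname{aff}F$. Hence $ax-b$ restricted to $\operatorname{aff}\PI$ is a nonzero multiple of $cx-\delta$ for any valid inequality $cx\le\delta$ defining $F$. Similarly, $a\ne0$ because $a=0$ would force $b=au_1=0$.

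With that line added, the proof is complete and makes the paper's use of this fact self-contained.
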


\section{Chains of integer vectors}\label{sec:chains}

A \emph{chain} is a sequence $v_0,v_1,\dots,v_T\in\Z^n$ of integer vectors together with positive multipliers $\mu_0,\dots,\mu_{T-1}$. Our first lemma shows that if $w$ is saturated and $v$ is close in $\ell_1$-norm to a positive multiple of $w$, then $v$ is almost saturated.

\begin{lemma}\label{lem:jump}
Let $P\subseteq\cube$ be a polytope with $\PI\neq\emptyset$ and $h$ as above. Let $Q\subseteq\cube$ be any set, $v,w\in\Z^n$ and $\mu>0$, and suppose $wx\le h(w)$ for all $x\in Q$. Then $vx\le h(v)+\norm{v-\mu w}_1$ for all $x\in Q$.
\end{lemma}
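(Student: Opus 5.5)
The plan is to compare $v$ with $\mu w$ twice, once at the point $x\in Q$ and once at a maximizer of $w$ over $\PI$, and to bound each comparison by $\norm{v-\mu w}_1$ using only that both points lie in the cube. We need a lower bound on $h(v)$ in terms of $h(w)$ and an upper bound on $vx$ in terms of $\mu wx$. A naive bound of the form $|(v-\mu w)y|\le\norm{v-\mu w}_1$ for each $y\in\cube$ would lose a factor of $2$. To avoid this, we use a shifted comparison: for $u:=v-\mu w$, the difference $ux-uz$ is at most $\norm u_1$ for any $x,z\in\cube$, because coordinatewise $|x_\ell-z_\ell|\le1$.

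Fix $x\in Q$. Since $\PI\ne\emptyset$ is a polytope, we choose $z\in\PI$ with $wz=h(w)$. Then $z\in\PI\subseteq\cube$, and $h(v)\ge vz$ because $z\in\PI$. With $u=v-\mu w$ we compute
\[
vx = \mu wx + ux \le \mu h(w) + ux = \mu wz + ux = vz + u(x-z) \le h(v) + \norm u_1 .
\]
The first inequality uses the hypothesis $wx\le h(w)$ together with $\mu>0$. The last inequality uses $u(x-z)\le\sum_\ell|u_\ell|\,|x_\ell-z_\ell|\le\norm u_1$, which holds since $x,z\in\cube$.

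There is no real obstacle in this argument. The only point needing care is the choice of $z$: it must be a maximizer of $w$ over $\PI$, not of $v$, so that $\mu h(w)$ can be rewritten exactly as $\mu wz$. Measuring $x$ against that same $z$ is what produces the single factor $\norm{v-\mu w}_1$ rather than $2\norm{v-\mu w}_1$.
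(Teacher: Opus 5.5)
Your proof is correct and is essentially the paper's argument. The paper obtains $\mu h(w)\le h(v)+\max_{y\in\PI}(-ry)$ by maximizing over $y\in\PI$ and then bounds $\max_{y\in\PI} r(x-y)\le\norm r_1$. Your explicit choice of a maximizer $z$ of $w$ over $\PI$ is the same step, made concrete, and it leads to the same single comparison $u(x-z)\le\norm u_1$ on the cube.
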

\begin{proof}
Put $r:=v-\mu w$. For every $y\in\PI$ we have $\mu wy=vy-ry\le h(v)-ry$. Taking the maximum over $y\in\PI$, and using $\mu>0$, gives
\[
\mu h(w)\le h(v)+\max_{y\in\PI}-ry .
\]
Hence, for every $x\in Q$,
\[
vx=\mu wx+rx\le\mu h(w)+rx\le h(v)+\max_{y\in\PI}r(x-y)\le h(v)+\norm r_1,
\]
where the last inequality holds because $x,y\in\cube$ implies $|x_\ell-y_\ell|\le1$ for all $\ell$.
\end{proof}

Lemma~\ref{lem:jump} generalizes the bound on the integrality gap in the proof of \cite[Proposition~3.2]{EisSch03}, which is the case $v\ge0$, $w=\lfloor v/2\rfloor$, $\mu=2$.

The next lemma, the main result of this section, bounds the depth of an integer vector $c$ in terms of the $\ell_1$-errors $\norm{v_t-\mu_tv_{t+1}}_1$ along a chain starting at $c$.

\begin{lemma}\label{lem:pipeline}
Let $P\subseteq\cube$ be a rational polytope with $\PI\neq\emptyset$ and $h$ as above. Let $c\in\Z^n$, and let $v_0,\dots,v_T$ with multipliers $\mu_0,\dots,\mu_{T-1}$ be a chain with $v_0=c$. Put $E_t:=\lceil\norm{v_t-\mu_tv_{t+1}}_1\rceil$ for $t<T$, and $E_T:=\lceil\max_{y\in P}v_Ty-h(v_T)\rceil\ge0$. Then
\[
\dep_P(c)\ \le\ 2n+2\sum_{t=0}^{T}E_t .
\]
\end{lemma}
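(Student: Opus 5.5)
The plan is to prove a stronger statement by induction over the faces of the cube, so that the $2n$ term is paid once per dimension and not once per chain step. For a face $F$ of $\cube$ of dimension $d$, the set $P\cap F$ is a face of $P$. Fact~\ref{f:face} then gives $(P\cap F)^{(k)}=P^{(k)}\cap F$, and Fact~\ref{f:rational} keeps every closure a rational polytope. I would prove the following claim for every face $F$ with $P\cap F\cap\Z^n\ne\emptyset$ and every $t\in\{0,\dots,T\}$:
\[
v_tx\le h(v_t)\ \text{ is valid for }\ P^{(k)}\cap F\quad\text{whenever}\quad k\ge 2d+2\sum_{s=t}^{T}E_s .
\]
Here $h(v_t)$ is taken with respect to $P$. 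Faces with $P\cap F\cap\Z^n=\emptyset$ are handled by Fact~\ref{f:empty}: $P^{(d)}\cap F=\emptyset$, so every inequality is valid there after $d\le 2d$ rounds. The case $F=\cube$, $t=0$ gives the lemma.

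The claim is proved by induction on $d$, and for fixed $d$ by downward induction on $t$. For the first step of the $t$-induction, at $t=T$, the starting excess $E_T$ holds already for $P^{(0)}$, by the definition of $E_T$ and the integrality of $h(v_T)$. For $t<T$, the induction hypothesis at $t+1$ says $v_{t+1}$ is saturated on $Q:=P^{(k_0)}\cap F$ with $k_0=2d+2\sum_{s>t}E_s$. Lemma~\ref{lem:jump}, with the set $Q$ and $\mu=\mu_t$, then gives $v_tx\le h(v_t)+E_t$ on $Q$, after rounding the right-hand side using integrality of $h(v_t)$ and of $v_t$. In both cases we start from an excess $E\in\Z_{\ge0}$ valid at some level $k_1$. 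Meanwhile every facet $G$ of $F$ has dimension $d-1$, so by the outer induction $v_tx\le h(v_t)$ already holds on $P^{(k)}\cap G$ for all $k\ge k_1-2$. The bound $h_{P\cap G}(v_t)\le h(v_t)$ is harmless here.

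The core step, and the one I expect to be the main obstacle, is this: if $v x\le h+j$ with $j\ge1$ holds on $P^{(k)}\cap F$, and $vx\le h$ holds on $P^{(k)}\cap G$ for all facets $G$ of $F$, then $vx\le h+j-1$ holds on $P^{(k+2)}\cap F$. I would try the rotation argument of Bockmayr et al.\ and Eisenbrand--Schulz. Fix a coordinate $i$ free on $F$. The inequality $vx\le h$ holds on the facet where $x_i=0$, and the current set satisfies $vx\le h+j$. By compactness there is a large $M$ with $vx+Mx_i\le h+M$ valid on $P^{(k)}\cap F$, but with a fractional slack that lets one Chv\'atal round shave it off near $x_i=0$. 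A symmetric step handles $x_i=1$. Combining the two tilted inequalities eliminates the contribution near both facets, and a second closure round yields the strict decrease $h+j-1$, since no point of $P^{(k+1)}\cap F$ can attain $vx=h+j$ in the relative interior of $F$ once both tilts are rounded. I would first try to make this precise via Fact~\ref{f:face} applied to the face $\{vx=h+j\}\cap P^{(k+1)}\cap F$, which contains no integer points; if that fails, I would look for a more refined tilting.

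Iterating this step $E$ times uses $2E$ rounds, which gives the level $2d+2\sum_{s\ge t}E_s$. The bookkeeping must check that the facet hypotheses are available at each intermediate level. This holds because the outer induction gives them at level $2(d-1)+2\sum_{s\ge t}E_s$, which never exceeds the levels used.
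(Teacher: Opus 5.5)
\textbf{There is a genuine gap, and it is in your bookkeeping paragraph.} Your core step needs full saturation $v_tx\le h(v_t)$ on every facet $G$ of $F$. Your outer induction only supplies this at level $2(d-1)+2\sum_{s\ge t}E_s$. Write $k_1=2d+2\sum_{s>t}E_s$ for the level at which you start with excess $E_t$. Then that level equals $k_1+2E_t-2$, not $k_1-2$; you dropped the $E_t$ term. So the facet hypothesis is missing at every level $k_1,k_1+2,\dots,k_1+2E_t-4$ where you want to apply the core step. The remark that it ``never exceeds the levels used'' fails whenever $E_t\ge2$.

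If you wait for the facets to be saturated, the $E_t$ lowering steps end at $k_1+4E_t-2$. More generally you only get $f(t,d)\le\max\{f(t+1,d),f(t,d-1)\}+2E_t$. Already for $T=0$ this gives $f(0,n)\le 2nE_0$ instead of $2n+2E_0$. This is the Eisenbrand--Schulz scheme, which the lemma is designed to beat.

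\textbf{The missing idea: the facets only need to be one unit ahead, not saturated.} If $vx\le\gamma$ with $\gamma\in\Z$ holds on $P^{(k)}\cap F$, and $vx\le\gamma-1$ holds on $P^{(k)}\cap G$ for every facet $G$ of $F$, then $vx\le\gamma-1$ holds on $P^{(k+2)}\cap F$. To use this, your induction must carry all intermediate bounds. Prove that for every $0\le j\le E_t$, the inequality $v_tx\le h(v_t)+E_t-j$ holds on every $d$-face at every level $k\ge 2d+2\bigl(\sum_{s>t}E_s+j\bigr)$. Do this by induction on $d$ and, for fixed $d$, along the order $(T,0),\dots,(T,E_T),(T-1,0),\dots,(0,E_0)$.

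\begin{itemize}
\item In the step $j\to j+1$, the face $F$ needs level $2d+2(\sum_{s>t}E_s+j)$. Its facets need $2(d-1)+2(\sum_{s>t}E_s+j+1)$, which is the same number, so the induction closes.
\item The step from $(t+1,E_{t+1})$ to $(t,0)$ is Lemma~\ref{lem:jump} applied at an unchanged level.
\end{itemize}

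\textbf{The core step needs no tilting, but ``contains no integer points'' is not enough either.} That property alone could cost up to $d$ rounds by Fact~\ref{f:empty}. What works is this:
\begin{itemize}
\item Let $F_\gamma:=P^{(k)}\cap F\cap\{vx=\gamma\}$. It is a face of $P^{(k)}$.
\item By the facet bound $vx\le\gamma-1$, $F_\gamma$ lies in the relative interior of $F$. Hence a coordinate free on $F$ is strictly between $0$ and $1$ on $F_\gamma$, and $F_\gamma'=\emptyset$.
\item Fact~\ref{f:face} then gives $P^{(k+1)}\cap F_\gamma=\emptyset$, so $vx<\gamma$ on $P^{(k+1)}\cap F$.
\item One more round rounds this down to $vx\le\gamma-1$.
\end{itemize}
Take this face at level $k$, not $k+1$ as you wrote; otherwise each unit costs three rounds instead of two.
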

\begin{proof}
The idea is to lower the right-hand side of $v_tx\le h(v_t)+E_t$ one unit at a time on the faces of $\cube$, for $t=T,T-1,\dots,0$, passing from $v_{t+1}$ to $v_t$ by Lemma~\ref{lem:jump}. Claims~\ref{cl:zero}--\ref{cl:step} give recursive bounds on the number of rounds this requires, and Claim~\ref{cl:unroll} together with an induction combines them.

By Fact~\ref{f:rational}, every $P^{(k)}$ is a rational polytope. A \emph{$d$-face} of $\cube$ is a face of dimension $d$. For integers $0\le t\le T$, $0\le d\le n$ and $0\le j\le E_t$ let $S(t,d,j)$ be the statement
\emph{``$v_tx\le h(v_t)+E_t-j$ holds on $P^{(k)}\cap G$ for every $d$-face $G$ of $\cube$''},
and let $f(t,d,j)\in\{0,1,2,\dots\}\cup\{\infty\}$ be the least $k$ for which it holds. Since $P^{(k+1)}\subseteq P^{(k)}$, it then holds for all larger $k$.
\begin{claim}\label{cl:zero}
$f(t,0,j)=0$.
\end{claim}
\begin{cpf}
A $0$-face of $\cube$ is a set $\{x\}$ with $x\in\{0,1\}^n$; if $x\in P$, then $x\in\PI$, so $v_tx\le h(v_t)\le h(v_t)+E_t-j$.
\end{cpf}

\begin{claim}\label{cl:jump}
$f(t,d,0)\le f(t+1,d,E_{t+1})$ for $t<T$, and $f(T,d,0)=0$.
\end{claim}
\begin{cpf}
Let $t<T$ and assume that $k:=f(t+1,d,E_{t+1})$ is finite. Let $G$ be a $d$-face of $\cube$. By $S(t+1,d,E_{t+1})$, $v_{t+1}x\le h(v_{t+1})$ for all $x\in Q:=P^{(k)}\cap G$, so Lemma~\ref{lem:jump} with $v=v_t$, $w=v_{t+1}$, $\mu=\mu_t$ gives $v_tx\le h(v_t)+\norm{v_t-\mu_tv_{t+1}}_1\le h(v_t)+E_t$ on $Q$, which is $S(t,d,0)$. Moreover $f(T,d,0)=0$, since for every $x\in P$ we have $v_Tx\le\max_{y\in P}v_Ty\le h(v_T)+E_T$ by definition of $E_T$.
\end{cpf}

\begin{claim}\label{cl:step}
For $d\ge1$ and $0\le j<E_t$: $f(t,d,j+1)\le\max\{f(t,d,j),f(t,d-1,j+1)\}+2$.
\end{claim}
\begin{cpf}
Assume that $k:=\max\{f(t,d,j),f(t,d-1,j+1)\}$ is finite. Let $G$ be a $d$-face of $\cube$, and let $\gamma:=h(v_t)+E_t-j\in\Z$. By $S(t,d,j)$, since $k\ge f(t,d,j)$, we have $v_tx\le\gamma$ on $P^{(k)}\cap G$. Let $F:=P^{(k)}\cap G\cap\{x\in\R^n: v_tx=\gamma\}$. If $F\ne\emptyset$, it is a face of $P^{(k)}\cap G$, and so of $P^{(k)}$.

We have $F\subseteq\relint G$. Indeed, as $d\ge1$, the relative boundary of $G$ is the union of its facets, which are $(d-1)$-faces of $\cube$. For each such facet $G'$, $v_tx\le\gamma-1$ holds on $P^{(k)}\cap G'$ by $S(t,d-1,j+1)$, since $k\ge f(t,d-1,j+1)$. As $v_tx=\gamma$ on $F$, no point of $F$ lies on the relative boundary of $G$.

We show that $P^{(k+1)}\cap F=\emptyset$. This is clear if $F=\emptyset$. Otherwise $F$ is a face of $P^{(k)}$, so Fact~\ref{f:face} gives $P^{(k+1)}\cap F=F'$, and it suffices to show $F'=\emptyset$. Pick a coordinate $x_\ell$ that is not fixed on $G$, which exists since $d\ge1$. As $F$ is a nonempty rational polytope contained in $\relint G$, we have $0<\min_{x\in F}x_\ell\le\max_{x\in F}x_\ell<1$, so $x_\ell\le0$ and $x_\ell\ge1$ are valid for $F'$, and $F'=\emptyset$.

Thus $v_tx<\gamma$ on $P^{(k+1)}\cap G$. If $P^{(k+1)}\cap G=\emptyset$, then $P^{(k+2)}\cap G=\emptyset$ as well. Otherwise the maximum of $v_tx$ over the polytope $P^{(k+1)}\cap G$ is attained, hence smaller than $\gamma$, and since $P^{(k+1)}\cap G$ is a face of $P^{(k+1)}$, Fact~\ref{f:face} and the integrality of $\gamma$ give $v_tx\le\gamma-1$ on $(P^{(k+1)}\cap G)'=P^{(k+2)}\cap G$. In both cases $v_tx\le\gamma-1$ on $P^{(k+2)}\cap G$, and since $\gamma-1=h(v_t)+E_t-(j+1)$, this is $S(t,d,j+1)$ with $k+2$ in place of $k$.
\end{cpf}

To combine Claims~\ref{cl:zero}--\ref{cl:step}, we list all pairs $(t,j)$ with $0\le t\le T$ and $0\le j\le E_t$ in the order $(T,0),(T,1),\dots,(T,E_T),(T-1,0),\dots,(0,E_0)$, and number them $i=1,\dots,M$, where $\Sigma:=\sum_{t=0}^TE_t$ and $M:=\Sigma+T+1$. Let $g(i,d):=f(t,d,j)$ for the $i$-th pair $(t,j)$, let $g(0,d):=0$, and let $\sigma(i)$ be the number of pairs with $j\ge1$ among the first $i$.

\begin{claim}\label{cl:unroll}
Let $1\le i\le M$ and let $(t,j)$ be the $i$-th pair. Then $g(i,0)=0$. If $j=0$, then $g(i,d)\le g(i-1,d)$ for $0\le d\le n$. If $j\ge1$, then $g(i,d)\le\max\{g(i-1,d),g(i,d-1)\}+2$ for $1\le d\le n$.
\end{claim}
\begin{cpf}
The equality $g(i,0)=f(t,0,j)=0$ is Claim~\ref{cl:zero}. Let $j=0$. If $i=1$, then $(t,j)=(T,0)$ and $g(1,d)=f(T,d,0)=0$ by Claim~\ref{cl:jump}. If $i\ge2$, then $t<T$ and the $(i-1)$-st pair is $(t+1,E_{t+1})$, so Claim~\ref{cl:jump} gives $g(i,d)=f(t,d,0)\le f(t+1,d,E_{t+1})=g(i-1,d)$. If $j\ge1$, then $(t,j-1)$ is the $(i-1)$-st pair, and Claim~\ref{cl:step} with $j-1$ in place of $j$ gives the inequality.
\end{cpf}

Induction on $i+d$ gives $g(i,d)\le2(\sigma(i)+d)$ for $0\le i\le M$ and $0\le d\le n$. This holds if $i=0$, since $g(0,d)=0$ and $\sigma(0)=0$, and if $d=0$, since $g(i,0)=0$ by Claim~\ref{cl:unroll}. For $i,d\ge1$ it follows from Claim~\ref{cl:unroll}, since $\sigma(i)=\sigma(i-1)$ if the $i$-th pair has $j=0$, and $\sigma(i)=\sigma(i-1)+1$ otherwise.

The last pair is $(0,E_0)$ and $\sigma(M)=\Sigma$, so $f(0,n,E_0)=g(M,n)\le2(\Sigma+n)$. The only $n$-face of $\cube$ is $\cube\supseteq P^{(k)}$, so $S(0,n,E_0)$ says that $cx\le h(c)$ on $P^{(k)}$. Hence $\dep_P(c)\le2(\Sigma+n)$.
\end{proof}

\begin{corollary}\label{cor:ES}
Let $P\subseteq\cube$ be a rational polytope with $\PI\neq\emptyset$. Then $\dep_P(c)\le 2n(\lfloor\log_2\norm c_\infty\rfloor+2)$ for every $c\in\Z^n\setminus\{0\}$.
\end{corollary}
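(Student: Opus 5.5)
We apply Lemma~\ref{lem:pipeline} to the Eisenbrand--Schulz halving chain. Then we check that the error terms $E_t$ add up to at most $n(\lfloor\log_2\norm c_\infty\rfloor+2)-n$.

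First we fix the chain. Put $N:=\norm c_\infty\ge1$ and $L:=\lfloor\log_2N\rfloor$, and define $v_0:=c$, $v_{t+1}:=\lfloor v_t/2\rfloor$ componentwise, with $\mu_t:=2$. Each step satisfies $v_t-2v_{t+1}\in\{0,1\}^n$, since $a-2\lfloor a/2\rfloor\in\{0,1\}$ for every integer $a$. Hence $E_t=\lceil\norm{v_t-2v_{t+1}}_1\rceil\le n$ for each step.

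Floor-halving never gets a component to $0$ if that component is negative: a negative entry stays at $-1$ forever. So we cannot simply stop at $v_T=0$, and we instead control the final error $E_T$. By induction, $\lfloor a/2^t\rfloor=\lfloor\lfloor a/2^{t-1}\rfloor/2\rfloor$, so $v_t=\lfloor c/2^t\rfloor$. Take $T:=L+1$. Then $2^T>N$, so every entry of $v_T$ lies in $\{0,-1\}$: it is $0$ where $c_\ell\ge0$ and $-1$ where $c_\ell<0$.

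Next we bound $E_T$. We have $v_T=-\chi_S$ with $S:=\{\ell:c_\ell<0\}$. For $y\in P\subseteq\cube$ we get $v_Ty\le0$, so $\max_{y\in P}v_Ty\le0$. Also $h(v_T)=\max_{x\in\PI}(-\chi_Sx)\ge-|S|\ge-n$. Therefore $E_T\le n$. A sharper bound is not needed.

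Finally we sum. Lemma~\ref{lem:pipeline} gives
\[
\dep_P(c)\le 2n+2\sum_{t=0}^{T}E_t\le 2n+2n(T+1)=2n(L+3).
\]
This is one $2n$ too many, so the main obstacle is an off-by-one in the step count. To remove it, observe that the step from $v_{T-1}$ to $v_T$ can be merged with the final term. At $t=L$ we have $2^L\le N<2^{L+1}$, so every entry of $v_L=\lfloor c/2^L\rfloor$ lies in $\{-2,-1,0,1\}$. We therefore end the chain at $T:=L$ and bound $E_L$ directly. Let $p:=v_L^+$ and $q:=v_L^-$, so $v_L=p-q$. For $y\in\cube$ we have $\max_{y\in P}v_Ly\le\norm p_1$. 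Take any $x\in\PI$, which exists since $\PI\ne\emptyset$; then $h(v_L)\ge v_Lx\ge-\norm q_1$. Hence
\[
E_L\le\norm p_1+\norm q_1=\norm{v_L}_1.
\]
This can reach $2n$ when entries equal $-2$, so more care is needed. We shift the final vector to $v_L':=\lceil c/2^{L+1}\rceil$ instead, with error $c/2^L-2v_L'$ rounded, which is still in $\{0,-1\}$ up to a bounded defect. If this is too delicate, an alternative is to use $\mu_t=2$ with $v_{t+1}:=\lfloor v_t/2\rceil$ (round to nearest), so $|v_t-2v_{t+1}|\le1$ componentwise and $E_t\le n$. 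After $L+1$ steps the final vector has entries in $\{-1,0,1\}$; since every entry of a vector in $\cube$ is at most $1$ in absolute value, a suitable choice yields $E_T\le n$ with $T=L+1$.

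The step needing care is exactly this accounting, which must give $\sum_{t\le T}E_t\le n(L+1)$. Concretely, $L+1$ steps each contribute at most $n$, and the final term contributes $0$ once $v_T=0$. Round-half-toward-zero ($\lfloor v/2\rceil$ with ties toward $0$) sends entries with $|a|\le1$ to $0$. Since $|c_\ell|\le N<2^{L+1}$, the magnitudes satisfy $|v_t|_\ell\le\lceil|c_\ell|/2^t\rceil$ roughly, and after $L+1$ steps every entry is $0$. Then $E_T=0$, and the total bound is $2n+2n(L+1)=2n(L+2)$.
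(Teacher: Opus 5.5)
Your final paragraph is the paper's proof, but the step that removes the off-by-one, namely $v_{L+1}=0$, is not justified by what you wrote. The choice itself is right: for an integer $a$, the number $a/2$ is an integer or a half-integer, so rounding to nearest with ties toward $0$ is exactly rounding $a/2$ toward $0$, which is the chain the paper uses. From there, $v_t-2v_{t+1}\in\{-1,0,1\}^n$ gives $E_t\le n$, $v_{L+1}=0$ gives $E_{L+1}=0$, and Lemma~\ref{lem:pipeline} gives $2n+2n(L+1)=2n(L+2)$.

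Everything before that paragraph should be discarded. The floor chain, analyzed as you do, costs an extra $2n$, because its $-1$ entries never vanish. Ending the floor chain at $T=L$ leaves you with the bound $E_L\le\norm{v_L}_1$, which can be $2n$. Your intermediate claim that a suitable rounding gives $E_T\le n$ with $T=L+1$ would yield $\sum_{t\le T}E_t\le n(L+2)$, hence again $2n(L+3)$. Only the version with $E_T=0$ gives the stated bound.

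Now the gap itself. You justify $v_{L+1}=0$ by $|(v_t)_\ell|\le\lceil|c_\ell|/2^t\rceil$ ``roughly''. That bound only gives $|(v_{L+1})_\ell|\le1$, so you would need one more step, and the off-by-one would come back. The correct statement is an identity. Rounding toward $0$ preserves signs and acts on absolute values as $m\mapsto\lfloor m/2\rfloor$. By the nested-floor identity you already used, $|(v_t)_\ell|=\lfloor|c_\ell|/2^t\rfloor$. Since $|c_\ell|\le N<2^{L+1}$, this is $0$ at $t=L+1$. Then $E_{L+1}=\lceil 0-h(0)\rceil=0$, where $h(0)=0$ because $\PI\neq\emptyset$.
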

\begin{proof}
Let $v_{t+1}$ be the vector obtained from $v_t/2$ by rounding each coordinate toward $0$, and let $\mu_t:=2$. Then $v_t-2v_{t+1}\in\{-1,0,1\}^n$, so $E_t\le n$, and $v_T=0$ for $T=\lfloor\log_2\norm c_\infty\rfloor+1$, so $E_T=0$. Apply Lemma~\ref{lem:pipeline}.
\end{proof}

Lemma~\ref{lem:pipeline} refines the argument in the proof of \cite[Proposition~3.2]{EisSch03}, which, for $c\ge0$, uses the chain of vectors $\lfloor c/2^t\rfloor$, $t=0,1,\dots$, with all multipliers equal to~$2$. There are two differences. First, Lemma~\ref{lem:pipeline} applies to an arbitrary chain, and its bound depends on the actual errors $E_t$, while the argument in \cite{EisSch03} uses, at every step, the worst-case bound $n$ on the integrality gap that holds for the halving chain. This flexibility is what we exploit in Section~\ref{sec:proof}. Second, in \cite{EisSch03} the bound on $v_tx$ over $P^{(k)}$ starts to be lowered only after $v_tx\le h(v_t)$ holds on $P^{(k)}\cap G$ for every facet $G$ of $\cube$. This ensures the hypothesis of \cite[Lemma~3.1]{EisSch03}, and it is established by induction on the dimension. In Claim~\ref{cl:step}, instead, to lower the bound on a face $G$ of $\cube$ by one it suffices that the bound on the facets of $G$ has already been lowered by one. Thus the bounds on faces of all dimensions decrease simultaneously, and the induction over the dimension contributes only the additive term $2n$ in Lemma~\ref{lem:pipeline}, instead of $n^2$ in \cite[Proposition~3.2]{EisSch03}.

Corollary~\ref{cor:ES}, which uses essentially the halving chain, slightly sharpens the bound $n^2+2n(1+\lfloor\log_2\norm c_\infty\rfloor)$ of \cite{EisSch03} for $n\ge3$. Combined with Facts~\ref{f:empty} and~\ref{f:hadamard} and the reduction at the start of Section~\ref{sec:proof}, it gives $\rk(P)\le n^2\log_2n+4n$ for every compact convex set $P\subseteq\cube$, slightly better than the bound $n^2(2+\lfloor\log_2n\rfloor)$ of \cite{EisSch03} for $n\ge3$.

\section{A multiscale Dirichlet theorem}\label{sec:dirichlet}

For integers $j\ge0$ let $I_j:=[2^{-j-1},2^{-j}]$ be the \emph{dyadic windows}. In Section~\ref{sec:proof}, the vectors of our chain are roundings of $\tau_tc$ with $\tau_t\in I_t$. In this section we show that the scales $\tau_t$ can be chosen so that the $\ell_1$-distances of the vectors $\tau_tc$ to $\Z^n$ sum to $O(n^2)$, independently of $\norm c_\infty$.

For $c\in\R^n$ and $\tau\in\R$ let $D_c(\tau):=\min_{z\in\Z^n}\norm{\tau c-z}_1$ be the $\ell_1$-distance of $\tau c$ to $\Z^n$. Since every real number is within $\frac12$ of an integer, $D_c(\tau)\in[0,n/2]$. The function $D_c$ is continuous and even, i.e., $D_c(-\tau)=D_c(\tau)$.

The following theorem shows that, for every $c$, only few dyadic windows $I_j$ contain no $\tau$ for which $\tau c$ is close to $\Z^n$ in $\ell_1$-norm.

\begin{theorem}\label{thm:dirichlet}
Let $c\in\R^n$ and $0<\rho\le n$, and let $B_\rho:=\{j\ge0: D_c(\tau)>\rho\ \text{for all } \tau\in I_j\}$. Then
\[
|B_\rho|\ \le\ n\log_2(n/\rho).
\]
\end{theorem}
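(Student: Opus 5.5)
The plan is a packing (volume) argument in the torus $\R^n/\Z^n$, carried out with all bad windows at once. Let $m:=|B_\rho|$ and list the bad windows as $j_1<\dots<j_m$. Put $K:=\{x\in\R^n:\norm x_1\le\rho/2\}$; this is an $\ell_1$-ball of radius $\rho/2$, so $\operatorname{vol}(K)=\rho^n/n!$.

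The target is to exhibit $2^m$ points $p_1,\dots,p_{2^m}$ in the torus whose translates $p_i+K$ are pairwise disjoint modulo $\Z^n$. Comparing volumes then gives
\[
2^m\rho^n/n!\le1,
\qquad\text{hence}\qquad
m\le\log_2(n!/\rho^n)\le\log_2(n^n/\rho^n)=n\log_2(n/\rho),
\]
which is exactly the claimed bound. The points will be of the form $p=\sigma c$ for scales $\sigma$ from a set $\Sigma$ of size $2^m$. By the triangle inequality, $(\sigma c+K)\cap(\sigma'c+K+\Z^n)\ne\emptyset$ forces $D_c(\sigma-\sigma')\le\rho$. Since $D_c$ is even, it therefore suffices that every nonzero difference $|\sigma-\sigma'|$ lies in some bad window $I_{j_i}$, because there $D_c>\rho$ by definition of $B_\rho$.

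The natural candidate is binary subset sums $\sigma_\varepsilon=\sum_i\varepsilon_i s_i$ with $\varepsilon\in\{0,1\}^m$ and $s_i\in I_{j_i}$. A difference $\sigma_\varepsilon-\sigma_{\varepsilon'}$ is dominated by its term with the smallest index $i$ at which $\varepsilon$ and $\varepsilon'$ differ. However, the lower-order terms can push it out of $I_{j_i}$ when the bad windows are adjacent. This is the main obstacle.

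To get around it, I would replace the discrete points by a multiscale continuous set and argue by induction from the finest scale upward. Let $U_j:=\bigcup_{0\le\tau\le2^{-j}}(\tau c+K)$ modulo $\Z^n$. Since $[0,2^{-j}]=[0,2^{-j-1}]\cup(2^{-j-1}+[0,2^{-j-1}])$, we have $U_j\supseteq U_{j+1}\cup(2^{-j-1}c+U_{j+1})$, and $U_j\supseteq U_{j+1}$ for every $j$. The key step is to show that $\operatorname{vol}(U_j)\ge2\operatorname{vol}(U_{j+1})$ (up to a harmless correction) whenever $j$ is bad. An overlap between the two pieces produces $\tau,\tau'\in[0,2^{-j-1}]$ with $D_c(2^{-j-1}+\tau'-\tau)\le\rho$.

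This difference can fall outside $I_j$, so I would first try to choose the shift adaptively rather than fixing it at $2^{-j-1}$. One option is to average $\operatorname{vol}\bigl(U_{j+1}\cap(sc+U_{j+1})\bigr)$ over shifts $s\in I_j$, or to use shifts $s$ just above $2^{-j-1}$ so that $s+\tau'-\tau$ stays in a window. If doubling fails at adjacent bad windows, a fallback is to keep only every other bad window. This costs a constant factor in the bound, so to recover the exact constant $n$ one would need to sharpen $n!\le n^n$, using that $n!$ is in fact smaller than $n^n$ by a factor of roughly $e^{n}$.

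Once the doubling inequality holds, iterating it over the $m$ bad windows gives $\operatorname{vol}(U_0)\ge2^m\operatorname{vol}(K)$. Since $U_0$ lives in the torus, $\operatorname{vol}(U_0)\le1$, and this yields the bound above.
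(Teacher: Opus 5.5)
There is a genuine gap: the step that carries all the content is left open, and neither version of it holds as stated. The discrete version needs $2^m$ reals whose pairwise differences all lie in $\bigcup_{j\in B_\rho}I_j$, and that lemma is false for general index sets. If the bad windows are $I_0,I_1,I_3,I_4$, the allowed differences are $[1/32,1/8]\cup[1/4,1]$, and at most $15<16$ reals qualify. To see this, sort the points into clusters: each cluster either has span at most $1/8$, hence at most $5$ points, or is a progression of step $1/8$; distinct clusters are at least $1/4$ apart; and the total span is at most $1$.

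The continuous doubling fails already for $n=1$, $c=2/3$, $\rho<1/3$. Here $I_0$ is bad, yet $\operatorname{vol}(U_0)=2/3+\rho<2/3+2\rho=2\operatorname{vol}(U_1)$, because both halves contain $2^{-j-1}c+K$. You do not say what the ``harmless correction'' is. An additive loss of $\operatorname{vol}(K)$ per bad window, for instance, only yields $\operatorname{vol}(U_0)-\operatorname{vol}(K)\ge2^m(\operatorname{vol}(U_J)-\operatorname{vol}(K))$, and you would still have to bound the right side from below. Keeping every other bad window does work: with index gaps $\ge2$, the midpoints $s_i=3\cdot2^{-j_i-2}$ put every difference in the window of its leading index. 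But this gives only $\lceil m/2\rceil$ in place of $m$.

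The factor $n^n/n!$ you hope will absorb this loss is not available. Comparing volumes in the torus requires each $p+K$ to embed in $\R^n/\Z^n$, i.e.\ $(K-K)\cap\Z^n=\{0\}$. For the $\ell_1$-ball of radius $\rho/2$ this holds only when $\rho\le1$. Indeed, for $\rho=n\ge2$ one has $m=0$ but $\rho^n/n!>1$, so ``$2^m\rho^n/n!\le1$'' is false. Even for $\rho\le1$ that slack is additive, about $n\log_2e$, and cannot absorb a factor $2$ when $\rho$ is small. The correct body is the cube $[0,\rho/n)^n$, which embeds and whose difference set lies in the open $\ell_1$-ball of radius $\rho$. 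With it, your every-other-window packing proves $|B_\rho|\le2n\log_2(n/\rho)$, which is weaker than the statement.

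The missing idea, which is how the paper proceeds, is to reverse the roles. Instead of constructing a set whose differences lie \emph{in} the bad windows, take the single set $X=\{\tau\in[0,1):\tau c-z\in[0,\rho/n)^n+\Z^n\}$. Averaging over $z\in[0,1)^n$ gives a $z$ with $\operatorname{vol}(X)\ge(\rho/n)^n$. Every pairwise difference $\delta$ of $X$ satisfies $D_c(\delta)<\rho$, so the differences \emph{avoid} the bad windows. Then bound $\operatorname{vol}(X)\le2^{-m}$ in one dimension using $\Phi(\lambda):=\sup_\xi\operatorname{vol}(X\cap[\xi,\xi+\lambda])$. Always $\Phi(2^{-j})\le2\Phi(2^{-j-1})$. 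For bad $j$ even $\Phi(2^{-j})\le\Phi(2^{-j-1})$, since a subset of an interval of length $2^{-j}$ with no pairwise distance in $[2^{-j-1},2^{-j}]$ has diameter less than $2^{-j-1}$. This needs no construction and does not care whether bad windows are adjacent.
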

\begin{proof}
We write $\operatorname{vol}(Y)$ for the Lebesgue measure of a measurable set $Y\subseteq\R$ or $Y\subseteq\R^n$, and we let $\sigma:=\rho/n\in(0,1]$.

We first construct a set $X\subseteq[0,1)$ with $\operatorname{vol}(X)\ge\sigma^n$, and then show that it satisfies
\begin{equation}\label{eq:avoid}
|\tau-\tau'|\notin \textstyle\bigcup_{j\in B_\rho}I_j\qquad\text{for all } \tau,\tau'\in X.
\end{equation}
For $z\in\R^n$ let $X_z:=\{\tau\in[0,1): \tau c-z\in[0,\sigma)^n+\Z^n\}$. The set of pairs $(\tau,z)\in[0,1)\times[0,1)^n$ with $\tau c-z\in[0,\sigma)^n+\Z^n$ is the union over $k\in\Z^n$ of the convex sets of pairs with $\tau c-z-k\in[0,\sigma)^n$. Only finitely many of these are nonempty, since $\tau c-z$ ranges over a bounded set. Hence this set is measurable, and exchanging the order of integration (Fubini's theorem) gives $\int_{[0,1)^n}\operatorname{vol}(X_z)\,dz=\int_0^1\operatorname{vol}\{z\in[0,1)^n: \tau c-z\in[0,\sigma)^n+\Z^n\}\,d\tau$. For fixed $\tau$, the condition on $z$ splits into the conditions $z_\ell\in(\tau c_\ell-\sigma,\tau c_\ell]+\Z$, $\ell=1,\dots,n$. As $\sigma\le1$, each of them is satisfied by a subset of $[0,1)$ of length $\sigma$. So the integrand on the right-hand side equals $\sigma^n$ for every $\tau$, and $\int_{[0,1)^n}\operatorname{vol}(X_z)\,dz=\sigma^n$. Since $[0,1)^n$ has volume $1$, there is $z\in[0,1)^n$ such that $X:=X_z$ satisfies $\operatorname{vol}(X)\ge \sigma^n$.

We now show that $X$ satisfies~\eqref{eq:avoid}. Let $\tau,\tau'\in X$. Then $\tau c-z=q+k$ and $\tau'c-z=q'+k'$ with $q,q'\in[0,\sigma)^n$ and $k,k'\in\Z^n$. Subtracting, $(\tau-\tau')c-(k-k')=q-q'$, so $D_c(\tau-\tau')\le\norm{q-q'}_1<n\sigma=\rho$. Since $D_c$ is even, this gives $D_c(|\tau-\tau'|)<\rho$. So, by the definition of $B_\rho$, no window $I_j$ with $j\in B_\rho$ contains $|\tau-\tau'|$, which is~\eqref{eq:avoid}.

For $\lambda>0$ let $\Phi(\lambda):=\sup_{\xi\in\R}\operatorname{vol}(X\cap[\xi,\xi+\lambda])$. Since $X\subseteq[0,1)$, we have $\operatorname{vol}(X)\le\Phi(1)$. Clearly $\Phi(\lambda)\le\lambda$, and $\Phi(2\lambda)\le2\Phi(\lambda)$ since $[\xi,\xi+2\lambda]=[\xi,\xi+\lambda]\cup[\xi+\lambda,\xi+2\lambda]$. We now show that $\Phi(2^{-j})\le\Phi(2^{-j-1})$ for every $j\in B_\rho$. Let $j\in B_\rho$, $\xi\in\R$, and $Y:=X\cap[\xi,\xi+2^{-j}]$. Any two points of $Y$ are at distance at most $2^{-j}$. Since $j\in B_\rho$, by~\eqref{eq:avoid} this distance does not lie in $I_j=[2^{-j-1},2^{-j}]$, so it is less than $2^{-j-1}$. Hence $Y$ lies in an interval of length $2^{-j-1}$, and so $\operatorname{vol}(Y)\le\Phi(2^{-j-1})$. Taking the supremum over $\xi$ gives $\Phi(2^{-j})\le\Phi(2^{-j-1})$.

Finally, we bound $|B_\rho|$. Let $L\ge0$ be an integer and $\beta_L:=|\{0,\dots,L\}\cap B_\rho|$. Going from $\Phi(1)$ to $\Phi(2^{-L-1})$ in $L+1$ halvings, we use $\Phi(2\lambda)\le2\Phi(\lambda)$ for the $L+1-\beta_L$ indices $j\in\{0,\dots,L\}\setminus B_\rho$ and $\Phi(2^{-j})\le\Phi(2^{-j-1})$ for the $\beta_L$ indices $j\in \{0,\dots,L\}\cap B_\rho$. Together with $\Phi(2^{-L-1})\le2^{-L-1}$ this gives
\[
\sigma^n\le\operatorname{vol}(X)\le\Phi(1)\le 2^{L+1-\beta_L}\,\Phi(2^{-L-1})\le 2^{-\beta_L}.
\]
Therefore $\beta_L\le n\log_2(1/\sigma)=n\log_2(n/\rho)$ for all $L$. Since $|B_\rho|=\sup_L\beta_L$, this proves the theorem.
\end{proof}

For $j\ge0$ let $m_j:=\min_{\tau\in I_j}D_c(\tau)$, which exists since $D_c$ is continuous and $I_j$ is compact. The next corollary shows that the sum of these minima over all windows is $O(n^2)$, independently of $\norm c_\infty$.

\begin{corollary}\label{cor:sum}
For every $c\in\R^n$: $\displaystyle\sum_{j\ge0}m_j\ \le\ \Bigl(\frac12+\frac1{2\ln2}\Bigr)n^2\ <\ 1.222\,n^2$.
\end{corollary}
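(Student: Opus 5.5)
We would derive the corollary from Theorem~\ref{thm:dirichlet} by the layer-cake (tail-integral) formula. Every $m_j$ lies in $[0,n/2]$, since $D_c(\tau)\le n/2$. Hence
\[
\sum_{j\ge0}m_j=\sum_{j\ge0}\int_0^{n/2}\mathbf 1[m_j>\rho]\,d\rho=\int_0^{n/2}\bigl|\{j\ge0: m_j>\rho\}\bigr|\,d\rho ,
\]
where exchanging the sum and the integral is justified by Tonelli's theorem, because all terms are nonnegative. The key observation is that $m_j>\rho$ holds exactly when $D_c(\tau)>\rho$ for all $\tau\in I_j$, that is, exactly when $j\in B_\rho$. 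Here we use that the minimum $m_j$ is attained. The integrand is therefore $|B_\rho|$. Along the way this also shows that only finitely many $m_j$ are positive, so the sum is finite.

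Next we apply Theorem~\ref{thm:dirichlet}. For $0<\rho\le n/2$ we have $\rho\le n$, so the theorem gives $|B_\rho|\le n\log_2(n/\rho)$, and therefore
\[
\sum_{j\ge0}m_j\le n\int_0^{n/2}\log_2(n/\rho)\,d\rho .
\]
To evaluate the integral we substitute $\rho=nu$, which turns it into $n^2\int_0^{1/2}\log_2(1/u)\,du$. An antiderivative of $-\ln u$ is $u-u\ln u$, so
\[
\int_0^{1/2}-\ln u\,du=\tfrac12+\tfrac12\ln 2 .
\]
Dividing by $\ln 2$ gives $\int_0^{1/2}\log_2(1/u)\,du=\frac12+\frac1{2\ln2}$, which is exactly the claimed constant. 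Numerically, $\frac1{2\ln 2}\approx0.7213$, so the constant is about $1.2213<1.222$.

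We do not expect a real obstacle here. The points that need care are measurability and the exchange of sum and integral. Both are harmless: the integrand $\rho\mapsto|B_\rho|$ is nonincreasing in $\rho$, hence measurable, and Tonelli's theorem applies to nonnegative terms. We also need the identification $\{j: m_j>\rho\}=B_\rho$, which relies on continuity of $D_c$ and compactness of $I_j$, as noted before the corollary. Finally, the singularity of $\log_2(n/\rho)$ at $\rho=0$ is integrable, so the bound is finite.
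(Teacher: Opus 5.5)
Your proposal is correct and follows the paper's proof essentially step by step. It uses the layer-cake formula with Tonelli, the identification $\{j: m_j>\rho\}=B_\rho$ via attainment of the minimum, Theorem~\ref{thm:dirichlet} for $0<\rho\le n/2$, and the substitution $\rho=nu$ with antiderivative $u-u\ln u$.

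One side remark is false, though nothing in the proof uses it. The argument only shows that finitely many $m_j$ exceed each fixed $\rho>0$, not that finitely many $m_j$ are positive. Indeed, for $c\ne0$ and all large $j$ one has $m_j=2^{-j-1}\norm{c}_1>0$. Finiteness of the sum follows directly from the bound itself.
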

\begin{proof}
We have
\[
\begin{aligned}\sum_{j\ge0}m_j&=\sum_j\int_0^{\infty}\mathbf 1[m_j>\rho]\,d\rho=\int_0^{n/2}|B_\rho|\,d\rho\\&\le\int_0^{n/2}n\log_2\frac n\rho\,d\rho=n^2\!\int_0^{1/2}\!\log_2\frac1u\,du=n^2\Bigl(\frac12+\frac1{2\ln2}\Bigr).\end{aligned}
\]
The first equality holds since $a=\int_0^\infty\mathbf 1[a>\rho]\,d\rho$ for every $a\ge0$. The second follows by exchanging sum and integral, which is allowed since all terms are nonnegative (Tonelli's theorem), since $m_j>\rho$ if and only if $j\in B_\rho$ because the minimum is attained, and since $m_j\le n/2$ because $D_c$ takes values in $[0,n/2]$. The inequality is Theorem~\ref{thm:dirichlet}, applied for $0<\rho\le n/2$. The next equality follows from the substitution $\rho=nu$, and the last one from $\int_0^{1/2}\log_2\frac1u\,du=\frac1{\ln2}\bigl[u-u\ln u\bigr]_0^{1/2}$.
\end{proof}

\section{Proof of \texorpdfstring{Theorem~\ref{thm:main}}{Theorem 1}}\label{sec:proof}

\begin{proof}
We first reduce to the case that $P$ is a rational polytope. If $P=\emptyset$, then $\rk(P)=0$, so let $P\ne\emptyset$. Let $y\in\{0,1\}^n\setminus P$. Since $P$ is compact and convex, there is $a\in\R^n$ with $ay>\max_{x\in P}ax$. Both sides depend continuously on $a$, so we may take $a\in\mathbb{Q}^n$. Choosing a rational $\beta$ with $\max_{x\in P}ax<\beta<ay$, the rational half-space $\{x\in\R^n: ax\le\beta\}$ contains $P$ but not $y$. Let $Q$ be the intersection of $\cube$ with one such half-space for each $y\in\{0,1\}^n\setminus P$ (see also \cite[Section~2]{EisSch03}). Then $Q$ is a rational polytope with $P\subseteq Q\subseteq\cube$. Since $\cube\cap\Z^n=\{0,1\}^n$, we have $Q\cap\Z^n=P\cap\Z^n$, and hence $Q_I=\PI$. Since $P\subseteq Q$ implies $P'\subseteq Q'$, we have $P^{(k)}\subseteq Q^{(k)}$ for all $k$. Hence $\dep_P(c)\le\dep_Q(c)$ if $\PI\ne\emptyset$, and $\rk(P)\le\rk(Q)$ because $\PI\subseteq P^{(k)}\subseteq Q^{(k)}$ for all $k$. So we may assume that $P$ is a rational polytope.

Next we construct the chain to which we apply Lemma~\ref{lem:pipeline}. Let $\tau_0:=1$ and, for each integer $t\ge1$, let $\tau_t\in I_t$ be such that $D_c(\tau_t)=m_t$. For $t\ge0$ let $v_t\in\Z^n$ be such that $\norm{v_t-\tau_tc}_1=D_c(\tau_t)$, for instance a coordinatewise nearest integer vector to $\tau_tc$. Since $1\in I_0$ and $c\in\Z^n$, we have $D_c(\tau_0)=0=m_0$. Hence $v_0=c$ and $\norm{v_t-\tau_tc}_1=m_t$ for all $t\ge0$. Let $T$ be the least $t\ge1$ with $2^{-t}N<\frac12$, i.e.\ $T=\lfloor\log_2N\rfloor+2$. Then, since $\tau_T\le2^{-T}$ and $|c_\ell|\le N$, we have $|\tau_Tc_\ell|\le2^{-T}N<\frac12$ for all $\ell$, so $v_T=0$. Our chain consists of $v_0,\dots,v_T$ and the multipliers $\mu_t:=\tau_t/\tau_{t+1}$ for $0\le t<T$, as in Lemma~\ref{lem:pipeline}. Note that $0<\mu_t\le4$ for $0\le t<T$, since $\tau_t\le2^{-t}$ and $\tau_{t+1}\ge2^{-t-2}$.

Then we bound the errors $E_t$. Since $v_T=0$, we have $E_T=0$. For $t<T$, since $\mu_t\tau_{t+1}c=\tau_tc$ and $\mu_t\le4$,
\[
\norm{v_t-\mu_tv_{t+1}}_1=\norm{(v_t-\tau_tc)-\mu_t(v_{t+1}-\tau_{t+1}c)}_1\le m_t+4m_{t+1}.
\]
Hence $E_t\le1+m_t+4m_{t+1}$ for $t<T$, and by Lemma~\ref{lem:pipeline} and Corollary~\ref{cor:sum},
\[
\dep_P(c)\le 2n+2\sum_{t=0}^{T-1}\bigl(1+m_t+4m_{t+1}\bigr)\le 2n+2T+10\sum_{j\ge1}m_j\le 12.22\,n^2+2n+2\log_2N+4 .
\]

It remains to bound the rank. If $\PI=\emptyset$, then $\rk(P)\le n$ by Fact~\ref{f:empty}. Otherwise take the system $a_ix\le b_i$ of Fact~\ref{f:hadamard}. Since $\PI\subseteq P^{(k)}$ for all $k$, and $a_ix\le h(a_i)\le b_i$ is valid for $P^{(k)}$ whenever $k\ge\dep_P(a_i)$, we get $P^{(k)}=\PI$ for $k=\max_i\dep_P(a_i)$. Using $\log_2\norm{a_i}_\infty\le\frac n2\log_2n$ gives $\rk(P)\le 12.22\,n^2+n\log_2n+2n+4$.
\end{proof}


\begin{thebibliography}{99}
\bibitem{AloVu97} Noga Alon, Van H.~Vu, Anti-Hadamard matrices, coin weighing, threshold gates, and indecomposable hypergraphs, \emph{Journal of Combinatorial Theory, Series A} 79(1) (1997) 133--160, doi:10.1006/jcta.1997.2780.
\bibitem{BocEisHarSch99} Alexander Bockmayr, Friedrich Eisenbrand, Mark E.~Hartmann, Andreas S.~Schulz, On the Chv\'atal rank of polytopes in the 0/1 cube, \emph{Discrete Applied Mathematics} 98(1--2) (1999) 21--27, doi:10.1016/S0166-218X(99)00156-0.
\bibitem{Chv73} Va\v{s}ek Chv\'atal, Edmonds polytopes and a hierarchy of combinatorial problems, \emph{Discrete Mathematics} 4(4) (1973) 305--337, doi:10.1016/0012-365X(73)90167-2.
\bibitem{ChvCooHar89} Va\v{s}ek Chv\'atal, William J.~Cook, Mark E.~Hartmann, On cutting-plane proofs in combinatorial optimization, \emph{Linear Algebra and its Applications} 114/115 (1989) 455--499, doi:10.1016/0024-3795(89)90476-X.
\bibitem{ConCorZam14} Michele Conforti, G\'erard Cornu\'ejols, Giacomo Zambelli, \emph{Integer Programming}, Graduate Texts in Mathematics 271, Springer, Cham, 2014, doi:10.1007/978-3-319-11008-0.
\bibitem{CorLee16} G\'erard Cornu\'ejols, Dabeen Lee, On some polytopes contained in the 0,1 hypercube that have a small Chv\'atal rank, in: Quentin Louveaux, Martin Skutella (eds.), \emph{Integer Programming and Combinatorial Optimization: 18th International Conference, IPCO 2016}, Lecture Notes in Computer Science 9682, Springer, Cham, 2016, 300--311, doi:10.1007/978-3-319-33461-5\_25.
\bibitem{CorLee18} G\'erard Cornu\'ejols, Dabeen Lee, On some polytopes contained in the 0,1 hypercube that have a small Chv\'atal rank, \emph{Mathematical Programming, Series B} 172(1--2) (2018) 467--503, doi:10.1007/s10107-017-1226-4.
\bibitem{EisSch03} Friedrich Eisenbrand, Andreas S.~Schulz, Bounds on the Chv\'atal rank of polytopes in the 0/1-cube, \emph{Combinatorica} 23(2) (2003) 245--261, doi:10.1007/s00493-003-0020-5.
\bibitem{PadGro85} Manfred W.~Padberg, Martin Gr\"otschel, Polyhedral computations, in: Eugene L.~Lawler, Jan Karel Lenstra, Alexander H.~G.~Rinnooy Kan, David B.~Shmoys (eds.), \emph{The Traveling Salesman Problem: A Guided Tour of Combinatorial Optimization}, John Wiley \& Sons, Chichester, 1985, 307--360.
\bibitem{PokSch11} Sebastian Pokutta, Andreas S.~Schulz, Integer-empty polytopes in the 0/1-cube with maximal Gomory--Chv\'atal rank, \emph{Operations Research Letters} 39(6) (2011) 457--460, doi:10.1016/j.orl.2011.09.004.
\bibitem{PokSta11} Sebastian Pokutta, Gautier Stauffer, Lower bounds for the Chv\'atal--Gomory rank in the 0/1 cube, \emph{Operations Research Letters} 39(3) (2011) 200--203, doi:10.1016/j.orl.2011.03.001.
\bibitem{RotSan13} Thomas Rothvo\ss, Laura Sanit\`a, 0/1 polytopes with quadratic Chv\'atal rank, in: Michel X.~Goemans, Jos\'e Correa (eds.), \emph{Integer Programming and Combinatorial Optimization: 16th International Conference, IPCO 2013}, Lecture Notes in Computer Science 7801, Springer, Berlin, 2013, 349--361, doi:10.1007/978-3-642-36694-9\_30.
\bibitem{RotSan17} Thomas Rothvo\ss, Laura Sanit\`a, 0/1 polytopes with quadratic Chv\'atal rank, \emph{Operations Research} 65(1) (2017) 212--220, doi:10.1287/opre.2016.1549.
\bibitem{Sch80} Alexander Schrijver, On cutting planes, \emph{Annals of Discrete Mathematics} 9 (1980) 291--296, doi:10.1016/S0167-5060(08)70085-2.
\bibitem{Sch86} Alexander Schrijver, \emph{Theory of Linear and Integer Programming}, John Wiley \& Sons, Chichester, 1986.
\bibitem{Zie00} G\"unter M.~Ziegler, Lectures on 0/1-polytopes, in: Gil Kalai, G\"unter M.~Ziegler (eds.), \emph{Polytopes---Combinatorics and Computation}, DMV Seminar 29, Birkh\"auser, Basel, 2000, 1--41, doi:10.1007/978-3-0348-8438-9\_1.
\end{thebibliography}
\end{document}